\title{Optimal estimates for the gradient of harmonic functions
in the multidimensional half-space}
\author{Gershon Kresin and  Vladimir Maz'ya\\ \\
{\it Dedicated to Louis Nirenberg on the occasion of his eighty fifth birthday}}
\date\ }
\numberwithin{equation}{section}
\newtheorem{lemma}{Lemma}
\newtheorem{theorem}{Theorem}
\newtheorem{proposition}[theorem]{Proposition}
\newtheorem{corollary}{Corollary}
\newcommand{\bs}{\boldsymbol}
\newenvironment{remark}{{\bf Remark}}
\begin{document}
\maketitle
{\bf Abstract.}
A representation of the sharp constant in a pointwise estimate of the gradient
of a harmonic function in a multidimensional half-space is obtained under the assumption that
function's boundary values belong to  $L^p$. This representation is concretized for the
cases $p=1, 2,$ and $\infty$.
\\
\\
{\bf 2000 MSC.} Primary: 35B30; Secondary: 35J05
\\
\\
{\bf Keywords:} Real-part theorems, multidimensional harmonic functions,
estimates of the gradient, Khavinson's problem

\setcounter{equation}{0}
\section{Introduction}
There is a series of sharp estimates for the first derivative of a function $f$ analytic
in the upper half-plane ${\mathbb C}_{+}=\{ z \in {\mathbb C}: \Im z > 0 \}$ with different
characteristics of the real part of $f$ in the majorant part  (see, e.g., \cite{KM}).
We mention, in particular, the Lindel\"of inequality in the half-plane
\begin{equation} \label{E_KM1}
| f'(z) | \leq \frac{1}{\Im z}\;
\sup_{\zeta \in {\mathbb C}_{+}} \{ \Re f(\zeta ) -\Re f(z) \}
\end{equation}
and two equivalent inequalities
\begin{equation} \label{E_KM2}
\hspace{-12mm}| f'(z) | \leq \frac{ 2}{\pi \Im z}\;
\sup _{\zeta \in {\mathbb C}_{+}}|\Re f(\zeta )|
\end{equation}
and
\begin{equation} \label{E_KM2A}
\hspace{-14mm}| f'(z) | \leq \frac{1}{\pi \Im z}\;\;
{\rm osc}_{_{\;{\mathbb C}_{+}}}\!(\Re f),
\end{equation}
where ${\rm osc}_{_{\;{\mathbb C}_{+}}}\!(\Re f)$ is the oscillation
of $\Re f$ on ${\mathbb C}_{+}$, and $z$ is an arbitrary point in
${\mathbb C}_{+}$.

Inequalities for analytic functions with certain characteristics of its real part
as majorants, are called {\it real-part theorems} in reference to the first
assertion of such a kind, the celebrated Hadamard real-part theorem
$$
|f(z)|\leq \frac{C|z|}{1-|z|}\;\max_{|\zeta|=1}\Re f(\zeta).
$$
Here $|z|<1$ and $f$ is an analytic function on the closure ${\overline {\mathbb D}}$
of the  unit disk ${\mathbb D}=\{ z: |z|< 1 \}$  vanishing at $z=0$. This inequality was
first obtained by Hadamard with $C=4$ in 1892 \cite{Ha}. The following refinement of
Hadamard real-part theorem due to Borel \cite{Bo1,Bo2}, Carath\'eodory \cite{Lan1,Lan2}
and Lindel\"of \cite{Lin}
\begin{equation} \label{HBC}
|f(z)-f(0)|\leq \frac{2|z|}{1-|z|} \sup_{|\zeta|<1}\Re \left \{ f(\zeta) - f(0) \right \},
\end{equation}
and  corollaries of the last sharp estimate are often called the
Borel-Carath\'eodory inequalities. Sometimes, (\ref{HBC}) is called
Hadamard-Borel-Carath\'eodory inequality (see, e.g. Burckel \cite{Bur}).
The collection of  real-part theorems and related assertions is rather broad. It
involves assertions of various form (see, e.g. \cite{KM} and the
bibliography collected there).

Obviously, the inequalities for the first derivative
of an analytic function (\ref{E_KM1})-(\ref{E_KM2A})
can be restated as estimates for the gradient of a harmonic function. For example,
inequality (\ref{E_KM2}) can be written in the form
\begin{equation} \label{L_H}
| \nabla u(z) | \leq \frac{2}{\pi y}\sup_{\zeta \in {\mathbb R}_{+}^2 } |u(\zeta)|,
\end{equation}
where  $u$ is a harmonic function in the half-plane
${\mathbb R}_{+}^2=\{ z=(x, y) \in {\mathbb R}^2 : y > 0 \}$.

\smallskip
In the present work we find a representation for the sharp coefficient
${\mathcal C}_p( x)$ in the inequality
\begin{equation} \label{E_IN1}
\left |\nabla u(x)\right |\leq {\mathcal C}_p( x)\big|\!\big |u \big |\!\big |_p,
\end{equation}
where $u$ is harmonic function in the half-space
${\mathbb R}^{n} _{+}=\big \lbrace x=(x', x_n): x'=(x_1,\dots,x_{n-1})
\in {\mathbb R}^{n-1}, x_{n} > 0\big \rbrace$, represented by the Poisson integral with
boundary values in $L_p({\mathbb R}^{n-1})$,  $||\cdot ||_p$ is the norm in
$L_p({\mathbb R}^{n-1})$, $1\leq p \leq \infty$, $x \in {\mathbb R}^{n} _{+}$. It is shown that
$$
{\mathcal C}_p( x)=C_p\;x_n^{(1-n-p)/p}
$$
and explicit formulas for $C_p$ in (\ref{E_IN1}) for $p=1, 2, \infty$ are given.

Note that a direct consequence of (\ref{E_IN1}) is the following sharp limit
relation for the gradient of a harmonic function in the $n$-dimensional domain
$\Omega $ with smooth boundary:
$$
\lim _{x \rightarrow {\cal O}_x}\;\big |x - {\cal O}_x \big |^{(n+p-1)/p}\;
\sup \big \{ |\nabla u(x)|: ||u |_{\partial\Omega}||_p \leq 1 \big \}=C_p,
$$
where ${\cal O}_x$ is a point at $\partial\Omega$ nearest to $x \in \Omega$ (compare
with Theorem 2 in \cite{KM_1}, where a relation of the same nature for the values
of solutions to elliptic systems was obtained).

\smallskip
In Section 2 we characterize $C_p$ in terms of an extremal problem on the unit
hemisphere in ${\mathbb R}^{n}$.
In Section 3 we reduce this problem to that of finding of the supremum
of a certain double integral, depending on a scalar parameter and show that
$$
C_1=\frac{2 (n-1)}{\omega _n}={(n-1)\Gamma\left ( n/2 \right )
\over \pi ^{n/2}}\;,\;\;\;\;\;\;\;\;\;\;\;
C_2=\sqrt{\frac{n(n-1)}{2^n\omega_{n}}}=
\sqrt{(n-1) \Gamma \left ( {n+2 \over 2}\right ) \over 2^n \pi ^{n/2}}\;,
$$
where $\omega_{n}$ is the area of the unit sphere in ${\mathbb R}^{n}$.

In Section 5 we treat the more difficult case of $p=\infty$.
We anticipate the proof of the main result by deriving in Section 4
an algebraic inequality to be used for finding an explicit formula for
$C_\infty $. Solving the variational problem stated in Section 3,
we find
$$
C_\infty=\frac{4(n-1)^{(n-1)/2}\;\omega_{n-1}}{ n^{n/2}\;\omega_n}
={4(n-1)^{(n-1)/2}\Gamma \left ( {n \over 2}\right ) \over \sqrt{\pi}n^{n/2}
\Gamma \left ( {n-1 \over 2}\right )}\;.
$$
In particular,
$$
C_\infty={4 \over 3\sqrt{3}}\;,\;\;\;\;\;\;
C_\infty={3\sqrt{3} \over 2\pi}\;.
$$
for $n=3$ and $n=4$, respectively.

As a trivial corollary of the inequality
\begin{equation} \label{E_IN2}
|\nabla u(x)|\leq \frac{4(n-1)^{(n-1)/2}\;\omega_{n-1}}{ n^{n/2}\;\omega_n x_n}\;
\sup _{y \in {\mathbb R}^{n} _{+}}|u(y)|\;,
\end{equation}
which is equivalent to (\ref{E_IN1}) with $p=\infty $,
we find
\begin{equation} \label{E_IN3}
|\nabla u(x)|\leq \frac{2(n-1)^{(n-1)/2}\;\omega_{n-1}}{ n^{n/2}\;\omega_n x_n}\;
{\rm osc}_{_{{\mathbb R}^{n}_+}}\!(u),
\end{equation}
where ${\rm osc}_{_{{\mathbb R}^{n}_+}}\!(u)$
is the oscillation
of $u$ on ${\mathbb R}^{n} _{+}$. The sharp inequalities (\ref{E_IN2}) and (\ref{E_IN3})
are multidimensional generalizations of analogues of the real-part theorems (\ref{E_KM2})
and (\ref{E_KM2A}), respectively.

The sharp constant in the inequality
$$
\left |{\partial u \over \partial |x|}\Big |_{x_{_0}} \right |
\leq K(x_{_0})\sup _{|y|<1}|u(y)|,
$$
where $u$ is a harmonic function in the three-dimensional unit ball $B$ and
$x_{_0} \in B$, was found by Khavinson \cite{KHAV}, who suggested,
in a private conversation, that the same constant $K(x_{_0})$ should appear in
the stronger inequality
$$
|\nabla  u(x_{_0})| \leq K(x_{_0})\sup _{|y|<1}|u(y)|.
$$
When dealing in Section 4 with the analogue of Khavinson's problem
for the multidimensional half-space, we show that in fact, the sharp constants
in pointwise estimates for the absolute value of the normal derivative and of
the modulus of the gradient of a harmonic function coincide. We also show that
similar assertions hold for  $p=1$ and $p=2$.
\setcounter{section}{1}
\setcounter{equation}{0}
\section{Auxilliary assertion}

We introduce some notation used henceforth.
Let ${\mathbb R}^{n} _{+}=\big \lbrace x=(x', x_n): x'=(x_1,\dots,x_{n-1})
\in {\mathbb R}^{n-1}, x_{n} > 0\big \rbrace$, ${\mathbb S}^{n-1}=
\{ x \in {\mathbb R}^{n}: |x|=1 \}$,
${\mathbb S}^{n-1}_+= \{ x \in {\mathbb R}^{n}: |x|=1,\; x_n>0 \}$ and
${\mathbb S}^{n-1}_{-}= \{ x \in {\mathbb R}^{n}: |x|=1,\; x_n<0 \}$.
Let $\bs e_\sigma $ stand for the
$n$-dimensional unit vector joining the origin to a point $\sigma $
on the sphere ${\mathbb S}^{n-1}$.

By $||\cdot ||_p$ we denote the norm in
the space $L^p({\mathbb R}^{n-1})$, that is
$$
|| f||_p=\left \{\int _ {{\mathbb R}^{n-1} } |f(x')|^p \;dx' \right \}^{1/p},
$$
if $1\leq  p< \infty $, and $|| f||_\infty =\mbox{ess}\;\sup \{ | f(x') |:
x' \in {\mathbb R}^{n-1} \}$.

Next, by $h^p({\mathbb R}^n_{+})$ we denote  the Hardy space of harmonic functions
on ${\mathbb R}^n_{+}$, which can be represented as the Poisson integral
\begin{equation} \label{EH_1}
u(x)=\frac{2}{\omega _n}\int _{{\mathbb R}^{n-1}}\frac{x_n}{|y-x|^n}\;u(y')dy'
\end{equation}
with boundary values in $L^p({\mathbb R}^{n-1} )$, $1\leq  p\leq \infty $,
where $y=(y', 0)$, $y' \in {\mathbb R}^{n-1}$.

Now, we find a representation for the best coefficient ${\mathcal C}_p( x; z)$
in the inequality for the absolute value of derivative of $u$ at $x\in {\Bbb R}^n_+$ in the
arbitrary direction $\bs z \in {\Bbb S}^{n-1}$ assuming that $L_p({\Bbb R}^{n-1})$.
In particular, we obtain a formula for the constant in a similar inequality for the
modulus of the gradient.

\setcounter{theorem}{0}
\begin{lemma} \label{L1_1}
Let $u \in h^p({\mathbb R}^n_{+})$, and let  $x $ be  an arbitrary point in
${\mathbb R}^n _+$. The sharp coefficient ${\mathcal C}_p (x; \bs z)$ in the inequality
$$
|\left ( \nabla u(x), \bs z \right ) |\leq {\mathcal C}_p( x; \bs z)
\big|\!\big |u \big |\!\big |_p
$$
is given by
\begin{equation} \label{EH_2A}
{\mathcal C}_p( x; \bs z)=C_p(\bs z) x_n^{(1-n-p)/p},
\end{equation}
where
\begin{equation} \label{EH_2B}
C_1(\bs z)=\frac{2}{\omega _n}\sup _{\sigma \in {\mathbb S}^{n-1}_+ }\big |\big (\bs e_n -n(\bs e_{\sigma}, \bs e_n)\bs e_{\sigma},\; \bs z \big )\big |\big (\bs e_{\sigma}, \bs e_n \big )^n,
\end{equation}
\begin{equation} \label{EH_3DA}
C_p(\bs z)=\frac{2}{\omega _n}
\left \{ \int _ {{\mathbb S}^{n-1}_+ }
\big |\big (\bs e_n -n(\bs e_{\sigma}, \bs e_n)\bs e_{\sigma},\; \bs z \big )\big |^{p/(p-1)}
\big (\bs e_{\sigma}, \bs e_n \big )^{n/(p-1)}\;d\sigma \right \}^{(p-1)/p}
\end{equation}
for $1<p<\infty $,
and
\begin{equation} \label{EH_3H}
C_\infty(\bs z)=\frac{2}{\omega _n}
\int _ {{\mathbb S}^{n-1}_+ }
\big |\big (\bs e_n -n(\bs e_{\sigma}, \bs e_n)\bs e_{\sigma},\; \bs z \big )\big |
\;d\sigma .
\end{equation}

In particular, the sharp coefficient ${\mathcal C}_p (x)$ in the inequality
$$
\left |\nabla u(x)\right |\leq {\mathcal C}_p( x)\big|\!\big |u \big |\!\big |_p
$$
is given by
\begin{equation} \label{EH_3J}
{\mathcal C}_p(x)=C_p\; x_n^{(1-n-p)/p},
\end{equation}
where
\begin{equation} \label{EH_4J}
C_p=\sup _{|\bs z|=1}C_p(\bs z).
\end{equation}
\end{lemma}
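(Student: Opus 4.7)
My strategy is to differentiate the Poisson representation (\ref{EH_1}) under the integral sign, apply H\"older's inequality in $y'$, and change variables from the boundary hyperplane to the upper hemisphere using the standard identity $(x_n/|y-x|^n)\,dy'=d\sigma$, where $\bs e_\sigma=(x-y)/|x-y|$ sweeps out $\mathbb{S}^{n-1}_+$ as $y'$ runs over $\mathbb{R}^{n-1}$.

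To execute this, first I would compute $\partial_{x_i}(x_n/|y-x|^n)=\delta_{in}/|y-x|^n-nx_n(x_i-y_i)/|y-x|^{n+2}$, factor out $1/|y-x|^n$, and invoke $(x-y)/|y-x|=\bs e_\sigma$ together with $x_n/|y-x|=(\bs e_\sigma,\bs e_n)$ (which holds because $y_n=0$) to arrive at
$$
(\nabla u(x),\bs z)=\frac{2}{\omega_n}\int_{\mathbb{R}^{n-1}}\frac{(\bs e_n-n(\bs e_\sigma,\bs e_n)\bs e_\sigma,\bs z)}{|y-x|^n}u(y')\,dy'.
$$
H\"older's inequality with conjugate exponents $p$ and $p'$ then gives $|(\nabla u(x),\bs z)|\le(2/\omega_n)\|u\|_p\,I_{p'}^{1/p'}$, where $I_{p'}$ is the integral of the $p'$-th power of the modulus of the scalar kernel. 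The passage to the sphere uses $|y-x|=x_n/(\bs e_\sigma,\bs e_n)$: writing $|y-x|^{-np'}=|y-x|^{-n}\cdot|y-x|^{n-np'}$ and noting that $np'-n=n/(p-1)$ transforms $I_{p'}$ into $x_n^{-(n+p-1)/(p-1)}$ times the sphere integral appearing in (\ref{EH_3DA}); extracting the $(p-1)/p$-th power then produces the $x_n^{(1-n-p)/p}$ factor of (\ref{EH_2A}). The endpoint cases $p=1$ and $p=\infty$ follow by replacing H\"older with the $L^1$--$L^\infty$ duality, yielding (\ref{EH_2B}) and (\ref{EH_3H}); in the former case the supremum is attained because the integrand is continuous on the closed hemisphere.

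Sharpness is then the equality case of H\"older. For $1<p<\infty$ I would take the boundary datum $u(y')=\operatorname{sgn}(g(y'))|g(y')|^{p'-1}$, with $g$ the scalar kernel above; since $g\in L^{p'}(\mathbb{R}^{n-1})$, the function $u$ lies in $L^p$, its Poisson extension belongs to $h^p$, and equality is attained. The case $p=\infty$ uses $u=\operatorname{sgn}(g)$, and $p=1$ an approximate Dirac mass concentrated at a maximizer of $|g|$. Finally, (\ref{EH_3J})--(\ref{EH_4J}) follow from $|\nabla u(x)|=\sup_{|\bs z|=1}|(\nabla u(x),\bs z)|$ together with the definition of $\mathcal{C}_p(x;\bs z)$. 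I expect the main technical obstacle to be the bookkeeping of the powers of $x_n$ and $(\bs e_\sigma,\bs e_n)$ in the change of variables, along with verifying that the proposed extremal boundary data actually lie in $L^p(\mathbb{R}^{n-1})$, which ultimately reduces to the integrability of $(\bs e_\sigma,\bs e_n)^{n/(p-1)}$ over the hemisphere.
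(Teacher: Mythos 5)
Your proposal is correct and follows essentially the same route as the paper: differentiate the Poisson kernel, recognize the resulting kernel as $\bs e_n - n(\bs e_\sigma,\bs e_n)\bs e_\sigma$ over $|y-x|^n$, invoke $L^p$--$L^{p'}$ duality (H\"older with its equality case) to identify the sharp constant as the $L^{p'}$-norm of the kernel, and convert the boundary integral to a hemisphere integral via $x_n|y-x|^{-n}\,dy'=d\sigma$ and $x_n/|y-x|=(\bs e_\sigma,\bs e_n)$. The only cosmetic difference is your orientation $\bs e_\sigma=(x-y)/|x-y|$, which lands directly on $\mathbb S^{n-1}_+$ and saves the paper's final substitution $\bs e_\sigma\mapsto-\bs e_\sigma$ from $\mathbb S^{n-1}_-$; you are also slightly more explicit about the extremal boundary data ($u=\operatorname{sgn}(g)|g|^{p'-1}$ and the approximate Dirac mass for $p=1$), where the paper simply asserts the duality formulas (\ref{EH_7AD}) and (\ref{EH_6A}) as sharp.
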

\begin{proof} Let $x=(x', x_n)$ be a fixed point in ${\mathbb R}^n_+$.
The representation (\ref{EH_1}) implies
$$
\frac{\partial u}{\partial x_i}=\frac{2}{\omega _n}\int _{{\mathbb R}^{n-1}}
\left [ \frac{\delta_{ni}}{|y-x|^n} + \frac{nx_n (y_i-x_i)}{|y-x|^{n+2}}  \right ]u(y')dy',
$$
that is
\begin{eqnarray*}
\nabla u(x)&=&\frac{2}{\omega _n}\int _{{\mathbb R}^{n-1}}
\left [\; \frac{\bs e_{n}}{|y-x|^n} + \frac{nx_n (y-x)}{|y-x|^{n+2}}\; \right ]u(y')dy'\\
& &\\
&=&\frac{2}{\omega _n}\int _{{\mathbb R}^{n-1}}
\frac{\bs e_{n}  - n(\bs e_{xy}, \bs e_{n})\bs e_{xy}}{|y-x|^n}\; u(y')dy',
\end{eqnarray*}
where $\bs e_{xy}=(y-x)|y-x|^{-1}$.
For any $\bs z \in {\mathbb S}^{n-1}$,
\begin{equation} \label{EH_DIR}
(\nabla u(x), \bs z)=
\frac{2}{\omega _n}\int _{{\mathbb R}^{n-1}}
\frac{(\bs e_{n}  - n(\bs e_{xy}, \bs e_{n})\bs e_{xy},\; \bs z)}{|y-x|^n}\; u(y')dy'.
\end{equation}
Hence,
\begin{equation} \label{EH_7AD}
{\mathcal C}_1( x; \bs z)=\frac{2}{\omega _n}
\sup _{y \in {\mathbb R}^{n-1}}
\frac{|(\bs e_{n}  - n(\bs e_{xy}, \bs e_{n})\bs e_{xy},\; \bs z)|}{|y-x|^n},
\end{equation}
and
\begin{equation} \label{EH_6A}
{\mathcal C}_p( x; \bs z)=\frac{2}{\omega _n}
\left \{\int _{{\mathbb R}^{n-1}}
\frac{\big |\big (\bs e_{n}  - n(\bs e_{xy}, \bs e_{n})\bs e_{xy}, \bs z \big )\big |^q }
{|y-x|^{nq}}\;dy' \right \}^{1/q}
\end{equation}
for $1<p \leq \infty $, where $p^{-1}+q^{-1}=1$.

Taking into account the equality
\begin{equation} \label{EH_COS}
\frac{x_n}{|y-x|}=(\bs e_{xy}, -\bs e_{n}),
\end{equation}
by (\ref{EH_7AD}) we obtain
\begin{eqnarray*}
{\mathcal C}_1( x; \bs z)
&=&\frac{2}{\omega _n}\sup _{y \in {\mathbb R}^{n-1}}
\frac{|(\bs e_{n}  - n(\bs e_{xy}, \bs e_{n})\bs e_{xy},\; \bs z)|}{x_n^n}
\left (\frac{x_n}{|y-x|}\right )^n \\
& &\\
&=&\frac{2}{\omega _n x_n^n}\sup _{\sigma \in {\mathbb S}^{n-1}_-}
\big |\big (\bs e_n -n(\bs e_{\sigma}, \bs e_n)\bs e_{\sigma},\; \bs z \big )\big |\big (\bs e_{\sigma}, -\bs e_n \big )^n.
\end{eqnarray*}
Replacing here $\bs e_\sigma$ by $-\bs e_\sigma$, we arrive at (\ref{EH_2A})
for $p=1$  with the sharp constant (\ref{EH_2B}).

Let $1<p \leq \infty $. Using (\ref{EH_COS}) and the equality
$$
\frac{1}{|y-x|^{nq}}=\frac{1}{x_n^{nq-n+1}}\left (\frac{x_n}{|y-x|} \right )^{n(q-1)}
\frac{x_n}{|y-x|^n}\;,
$$
and replacing  $q$ by $p/(p-1)$ in (\ref{EH_6A}), we conclude that (\ref{EH_2A}) holds
with the sharp constant
$$
C_p(\bs z)=\frac{2}{\omega _n}
\left \{ \int _ {{\mathbb S}^{n-1}_- }
\big |\big (\bs e_n -n(\bs e_{\sigma}, \bs e_n)\bs e_{\sigma},\; \bs z \big )\big |^{p/(p-1)}
\big (\bs e_{\sigma}, -\bs e_n \big )^{n/(p-1)}\;d\sigma \right \}^{(p-1)/p},
$$
where ${\mathbb S}^{n-1}_- =\{ \sigma \in {\mathbb S}^{n-1}: (\bs e_\sigma , \bs e_n)<0 \}$.
Replacing here $\bs e_\sigma$ by $-\bs e_\sigma$, we arrive at (\ref{EH_3DA})
for $1<p<\infty$ and at (\ref{EH_3H}) for $p=\infty$.

By (\ref{EH_DIR}) we have
$$
\big |\nabla u(x)\big |=\frac{2}{\omega _n}\sup _{|\bs z|=1}\int _{{\mathbb R}^{n-1}}
\frac{(\bs e_{n}  - n(\bs e_{xy}, \bs e_{n})\bs e_{xy},\; \bs z)}{|y-x|^n}\; u(y')dy'.
$$
Hence, by the permutation of suprema, (\ref{EH_6A}), (\ref{EH_7AD}) and (\ref{EH_2A}),
\begin{eqnarray} \label{EH_6AJ}
{\mathcal C}_p( x)&=&\frac{2}{\omega _n}\sup _{|\bs z|=1}
\left \{\int _{{\mathbb R}^{n-1}}
\frac{\big |\big (\bs e_{n}  - n(\bs e_{xy}, \bs e_{n})\bs e_{xy}, \bs z \big )\big |^q }
{|y-x|^{nq}}\;dy' \right \}^{1/q} \nonumber\\
& &\nonumber\\
&=&\sup _{|\bs z|=1}{\mathcal C}_p( x; \bs z)=\sup _{|\bs z|=1}C_p(\bs z) x_n^{(1-n-p)/p}
\end{eqnarray}
for $1<p \leq \infty $, and
\begin{eqnarray} \label{EH_7ADJ}
{\mathcal C}_1( x)&=&\frac{2}{\omega _n}\sup _{|\bs z|=1}\;
\sup _{y \in {\mathbb R}^{n-1}}
\frac{|(\bs e_{n}  - n(\bs e_{xy}, \bs e_{n})\bs e_{xy},\; \bs z)|}{|y-x|^n}\nonumber\\
& &\nonumber\\
&=&\sup _{|\bs z|=1}{\mathcal C}_1( x; \bs z)=\sup _{|\bs z|=1}C_1(\bs z) x_n^{-n}.
\end{eqnarray}
Using the notation (\ref{EH_4J}) in (\ref{EH_6AJ}) and (\ref{EH_7ADJ}),
we arrive at (\ref{EH_3J}).
\end{proof}

\begin{remark}\;{\bf 1}. Formula (\ref{EH_3DA}) for the coefficient $C_p(\bs z), 1<p< \infty,$
can be written with the integral over the whole sphere ${\mathbb S}^{n-1}$
in ${\mathbb R}^{n}$,
$$
C_p(\bs z)\!=\!\frac{2^{1/p}}{\omega _n}
\left \{ \int _ {{\mathbb S}^{n-1} }
\big |\big (\bs e_n\!-\!n(\bs e_{\sigma}, \bs e_n)\bs e_{\sigma},\; \bs z \big )\big |^{p/(p-1)}
\big | \big (\bs e_{\sigma}, \bs e_n \big )\big |^{n/(p-1)}\;d\sigma \right \}^{(p-1)/p}\!.
$$
A similar remark relates (\ref{EH_3H}) as well as
formula (\ref{EH_2B}):
$$
C_1(\bs z)=\frac{2}{\omega _n}\sup _{\sigma \in {\mathbb S}^{n-1}}\big |\big (\bs e_n -n(\bs e_{\sigma}, \bs e_n)\bs e_{\sigma},\; \bs z \big )
\big (\bs e_{\sigma}, \bs e_n \big )^n \big |\;.
$$
\end{remark}

\setcounter{equation}{0}
\section{The case $1\leq p <\infty $}

The next assertion is based on the representation for $C_p$,
obtained in Lemma \ref{L1_1}.

\setcounter{theorem}{0}
\begin{proposition} \label{SP1_1} Let $u \in h^p({\mathbb R}^n_{+})$,
and let  $x $ be  an arbitrary point in ${\mathbb R}^n _+$.
The sharp coefficient ${\mathcal C}_p (x)$ in the inequality
\begin{equation} \label{EH_2_3}
|\nabla u(x)|\leq {\mathcal C}_p( x)\big|\!\big |u \big |\!\big |_p
\end{equation}
is given by
\begin{equation} \label{EH_2A_3A}
{\mathcal C}_p( x)=C_p x_n^{(1-n-p)/p}\;,
\end{equation}
where
\begin{equation} \label{EH_2BAA}
C_1=\frac{2 (n-1)}{\omega _n},
\end{equation}
and
\begin{equation} \label{EH_1ABC}
C_p\!=\!\frac{2 (\omega_{n-2})^{(p-1)/p}}{\omega_n}
\sup _{\gamma \geq 0}\;\frac{1}{\sqrt{1+\gamma^2}}\left \{ \int _ {0}^{\pi}d\varphi\int _ {0}^{\pi/2}{\mathcal F}_{n,p}(\varphi, \vartheta ; \gamma) \;
d\vartheta\right \}^{(p-1)/p},
\end{equation}
if  $1<p<\infty $. Here
\begin{equation} \label{EH_1AC}
{\mathcal F}_{n,p}(\varphi, \vartheta ; \gamma)=\big |(n\cos^2 \vartheta -1)+n\gamma\cos \vartheta
\sin \vartheta\cos \varphi \big |^{p/(p-1)} \cos^{n/(p-1)}\vartheta \sin^{n-2}\vartheta \sin^{n-3}
\varphi\;.
\end{equation}

In particular,
\begin{equation} \label{EH_2AC2}
C_2=\sqrt{\frac{n(n-1)}{2^n\omega_{n}}}\;.
\end{equation}

For $p=1$ and $p=2$ the coefficient ${\mathcal C}_p( x)$ is optimal also in
the weaker inequality obtained from $(\ref{EH_2_3})$ by replacing $\nabla u$ by
$\partial u /\partial x_n$.
\end{proposition}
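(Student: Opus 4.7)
The plan is to start from Lemma~\ref{L1_1} and exploit the rotational symmetry of the kernel around $\bs e_n$ to reduce each of the three cases (general $p$, $p=1$, $p=2$) to a tractable optimization.

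For the general formula \eqref{EH_1ABC}--\eqref{EH_1AC}, I may assume without loss of generality that $\bs z = \cos\alpha\,\bs e_n + \sin\alpha\,\bs e_1$ and set $\gamma = \tan\alpha \geq 0$. Parameterize $\sigma \in \mathbb{S}^{n-1}_+$ by the co-latitude $\vartheta \in [0,\pi/2]$ from $\bs e_n$, the longitude $\varphi \in [0,\pi]$ measured from $\bs e_1$ in the orthogonal hyperplane, and a unit vector $\bs\omega \in \mathbb{S}^{n-3}$, with surface element $\sin^{n-2}\vartheta \sin^{n-3}\varphi\, d\vartheta\, d\varphi\, d\omega$. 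A direct computation yields
$$
(\bs e_n - n(\bs e_\sigma, \bs e_n)\bs e_\sigma,\, \bs z) = -\frac{(n\cos^2\vartheta - 1) + n\gamma\cos\vartheta\sin\vartheta\cos\varphi}{\sqrt{1+\gamma^2}}.
$$
Substituting into the formulas \eqref{EH_3DA} and \eqref{EH_3H} of Lemma~\ref{L1_1} and integrating out $\bs\omega$ (which contributes the factor $\omega_{n-2}$) produces \eqref{EH_1ABC}.

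For $p = 1$, Lemma~\ref{L1_1} reduces the problem to $C_1 = (2/\omega_n)\sup_{\sigma,\,|\bs z|=1}|(\bs e_n - nt\bs e_\sigma,\bs z)|\,t^n$ with $t=(\bs e_\sigma,\bs e_n)\in[0,1]$. The inner supremum over $\bs z$ gives $|\bs e_n - nt\bs e_\sigma| = \sqrt{1 + n(n-2)t^2}$, reducing the task to maximizing $g(t) = t^{2n}[1 + n(n-2)t^2]$ on $[0,1]$. Its derivative $g'(t) = 2nt^{2n-1}[1 + (n-2)(n+1)t^2]$ is non-negative for $n\geq 2$, so the maximum is $g(1) = (n-1)^2$, proving \eqref{EH_2BAA}. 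The extremal configuration has $\bs e_\sigma = \bs e_n$ and $\bs z\parallel \bs e_n$, so $C_1$ is simultaneously sharp for $|\partial u/\partial x_n|$.

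For $p = 2$, expanding $|(\bs e_n - nt\bs e_\sigma,\bs z)|^2$ and decomposing $\bs z = z_n\bs e_n + \bs z'$ with $\bs z'\perp \bs e_n$, the rotational symmetry around $\bs e_n$ eliminates the cross term and produces $C_2(\bs z)^2 = Az_n^2 + B|\bs z'|^2$ for constants $A,B$ expressible as Beta integrals in $s=\cos\vartheta$. The crucial step is to verify that $A/B = n-1$, which forces the supremum over $|\bs z|=1$ to be attained at $\bs z = \bs e_n$ and yields the optimality for $|\partial u/\partial x_n|$. Applying the Legendre duplication identity $\Gamma(n/2)\Gamma((n+1)/2) = 2^{1-n}\sqrt{\pi}\,\Gamma(n)$ simplifies $A$ to the closed form \eqref{EH_2AC2}. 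The main technical obstacle is precisely this $p=2$ Beta-function bookkeeping and the verification that the normal direction dominates; the general representation and the $p=1$ case are respectively symmetry-driven substitution and one-variable calculus.
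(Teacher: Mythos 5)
Your proposal follows essentially the same route as the paper: reduce via Lemma~\ref{L1_1}, exploit rotational symmetry about $\bs e_n$ (so the supremum over $\bs z$ becomes a supremum over $\gamma=|\bs z'|/z_n$), settle $p=1$ by a one-variable maximization attained at $(\bs e_\sigma,\bs e_n)=1$, and settle $p=2$ by expanding the square, killing the cross term via $\int_0^\pi\cos\varphi\sin^{n-3}\varphi\,d\varphi=0$, and comparing the two resulting Beta integrals. The only difference is cosmetic -- you parameterize $\mathbb{S}^{n-1}_+$ directly by nested spherical coordinates whereas the paper projects to the disk $\mathbb{B}^{n-1}$ and invokes a Prudnikov--Brychkov--Marichev formula before substituting $r=\sin\vartheta$ -- and both yield the identical double integral \eqref{EH_1ABC}--\eqref{EH_1AC}.
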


\begin{proof} The equality (\ref{EH_2A_3A}) was proved in Lemma \ref{L1_1}.

(i) Let $p=1$. Using (\ref{EH_2B}), (\ref{EH_4J}) and the permutability of two
suprema, we find
\begin{eqnarray} \label{EH_2BP}
C_1&=&\frac{2}{\omega _n}\sup_{|\bs z|=1}\sup _{\sigma \in {\mathbb S}^{n-1}_+ }
\big |\big (\bs e_n -n(\bs e_{\sigma}, \bs e_n)\bs e_{\sigma},\; \bs z \big )
\big |\big (\bs e_{\sigma}, \bs e_n \big )^n \nonumber\\
& &\nonumber\\
&=&\frac{2}{\omega _n}\sup _{\sigma \in {\mathbb S}^{n-1}_+ }
\big |\bs e_n -n(\bs e_{\sigma}, \bs e_n)\bs e_{\sigma}\big |
\big (\bs e_{\sigma}, \bs e_n \big )^n \;.
\end{eqnarray}
Taking into account the equality
\begin{eqnarray*}
|\bs e_{n} - n(\bs e_{\sigma}, \bs e_{n})\bs e_{\sigma}|&=&
\Big (\bs e_{n}  - n(\bs e_{\sigma}, \bs e_{n})\bs e_{\sigma}, \;\bs e_{n}
- n(\bs e_{\sigma}, \bs e_{n})\bs e_{\sigma} \Big )^{1/2}\\
& &\\
&=&\Big (1+(n^2-2n)(\bs e_{\sigma},\; \bs e_{n})^2 \Big )^{1/2},
\end{eqnarray*}
and using (\ref{EH_2BP}), we arrive at the sharp constant (\ref{EH_2BAA}).

Furthermore, by (\ref{EH_2B}),
$$
C_1(\bs e_n)=\frac{2}{\omega _n}\sup _{\sigma \in {\mathbb S}^{n-1}_+ }
\big |1 -n(\bs e_{\sigma}, \bs e_n)^2 |\big (\bs e_{\sigma}, \bs e_n \big )^n \geq
\frac{2 (n-1)}{\omega _n}.
$$
Hence, by $C_1 \geq C_1(\bs e_n)$ and by (\ref{EH_2BAA}) we obtain $C_1=C_1(\bs e_n)$,
which completes the proof in the case $p=1$.

\medskip
(ii) Let $1<p<\infty $.
Since the integrand in (\ref{EH_3DA}) does not change when $\bs z \in {\Bbb S}^{n-1}$
is replaced by $-\bs z$, we may assume that $z_n=(\bs e_n, \bs z) > 0$ in (\ref{EH_4J}).

Let $\bs z'=\bs z-z_n\bs e_n$. Then $(\bs z', \bs e_n)=0$ and hence
$z^2_n+|\bs z'|^2=1$. Analogously, with
$\sigma=(\sigma_1,\dots,\sigma_{n-1},\sigma_n) \in {\mathbb S}^{n-1}_+$,
we associate the vector $\bs \sigma '=\bs e_\sigma -\sigma_n\bs e_n$.

Using the equalities $(\bs \sigma ', \bs e_n)=0$,
$\sigma_n =\sqrt{1-|\bs \sigma '|^2}$
and $(\bs z', \bs e_n)=0$, we find an expression for
$(\bs e_n -n(\bs e_{\sigma}, \bs e_n)\bs e_{\sigma},\; \bs z \big )$
as a function of $\bs \sigma'$:
\begin{eqnarray}
(\bs e_n -n(\bs e_{\sigma}, \bs e_n)\bs e_{\sigma},\; \bs z \big )&=&
z_n-n\sigma_n\big ( \bs e_{\sigma}, \bs z \big )=z_n-n\sigma_n
\big ( \bs \sigma '+\sigma_n \bs e_n,\; \bs z'+z_n\bs e_n \big )\nonumber\\
&=&z_n-n\sigma_n
\big [ \big ( \bs \sigma ', \bs z' \big )+z_n \sigma_n\big ]\nonumber\\
&=&-\big [n(1-|\bs\sigma '|^2) -1 \big ]z_n-n\sqrt{1-|\bs\sigma '|^2}\;
\big ( \bs \sigma ', \bs z' \big ) .
\label{EH_B}
\end{eqnarray}

Let ${\mathbb B}^{n-1}=\{ x'=(x_1,\dots,x_{n-1})
\in {\mathbb R}^{n-1}: |x'|< 1 \}$.
By (\ref{EH_B}), taking into account that $d\sigma=d\sigma '/\sqrt{1-|\bs\sigma '|^2}$ ,
we may write (\ref{EH_3DA}) as
\begin{equation} \label{EH_B1C}
C_p=\frac{2}{\omega _n}\sup _{\bs z \in {\Bbb S}^{n-1}_+}\left \{ \int _ {{\mathbb B}^{n-1}}
{\mathcal H}_{n, p}\big  ( |\bs \sigma '|,  (\bs \sigma ',  \bs z') \big  )\;d\sigma ' \right \}^{(p-1)/p},
\end{equation}
where
\begin{eqnarray*}
& &\hspace{-2cm}{\mathcal H}_{n, p}\big  ( |\bs \sigma '|,  (\bs \sigma ',  \bs z') \big  )={\Big | \big [n(1-|\bs\sigma '|^2) -1 \big ]z_n+n
\sqrt{1-|\bs\sigma '|^2}\;\big ( \bs \sigma ', \bs z' \big )\Big |^{p/(p-1)}
\big ( 1-|\bs\sigma '|^2\big )^{n/2(p-1)} \over \sqrt{1-|\bs\sigma '|^2}}\\
& &\\
&=&\Big |\big [n(1-|\bs\sigma '|^2) -1 \big ]z_n+n
\sqrt{1-|\bs\sigma '|^2}\;\big ( \bs \sigma ', \bs z' \big )\Big |^{p/(p-1)}
\big ( 1-|\bs\sigma '|^2\big )^{(n+1-p)/2(p-1)}.
\end{eqnarray*}
Let $B^{n}=\{ x \in {\mathbb R}^{n}: |x|< 1 \}$.
Using the well known formula (see e.g. \cite{PBM}, \textbf{3.3.2(3)}),
$$
\int_{B^n}g\big (|\bs x|, (\bs a, \bs x)\big )dx=\omega_{n-1}\int_0^1 r^{n-1}dr
\int_0^\pi g\big ( r, |\bs a|r \cos \varphi \big )\sin ^{n-2}\varphi \;d\varphi \;,
$$
we obtain
\begin{equation} \label{E_PBM}
\int _ {{\mathbb B}^{n-1}}{\mathcal H}_{n, p}\big  ( |\bs \sigma '|,  (\bs \sigma ',  \bs z') \big  )\;d\sigma'
=\omega_{n-2}\int^{1}_{0} r^{n-2}dr\int^{\pi}_{0}{\mathcal H}_{n, p}\big  (r, \; r|\bs z'|\cos  \varphi  \big  )
\sin ^{n-3}\varphi \;d\varphi\;,
\end{equation}
where
$$
{\mathcal H}_{n, p}\big  (r, \; r|\bs z'|\cos  \varphi  \big  )=\Big |\big [n(1-r^2) -1 \big ]z_n+n\sqrt{1-r^2}\;r|\bs z'|
\cos \varphi \Big |^{p/(p-1)} \big ( 1-r^2\big )^{(n+1-p)/2(p-1)}.
$$
Making the change of variable $r=\sin \vartheta $ in (\ref{E_PBM}), we find
\begin{eqnarray*}
& &\int _ {{\mathbb B}^{n-1}}{\mathcal H}_{n, p}\big  ( |\bs \sigma '|,  (\bs \sigma ',  \bs z') \big  )\;d\sigma'
\\
& &\\
& &=\omega _{n-2}\int^{\pi}_{0}\sin ^{n-3}\varphi d\varphi \int^{\pi/2}_{0}
\Big |\big (n \cos^2 \vartheta -1 \big )z_n+n|\bs z'|\cos \vartheta \sin \vartheta
\cos \varphi \Big |^{p/(p-1)} \sin ^{n-2}\vartheta \cos^{n/(p-1)}\vartheta \;d\vartheta  \;.
\end{eqnarray*}
Introducing here the parameter $\gamma =|\bs z'|/z_n$ and using
the equality $|\bs z'|^2+z^2_n=1$ together with (\ref{EH_B1C}) and (\ref{EH_1AC}),
we arrive at (\ref{EH_1ABC}).

\medskip
(iii) Let $p=2$. By (\ref{EH_1ABC}) and (\ref{EH_1AC}),
\begin{equation} \label{EH_9ABC}
C_2= \frac{2\sqrt{\omega_{n-2}}}{\omega_n}\;\sup _{\gamma \geq 0}\;\frac{1}
{\sqrt{1+\gamma^2}}\left \{ \int _ {0}^{\pi}d\varphi\int _ {0}^{\pi/2}\!\!
{\mathcal F}_{n,2}(\varphi, \vartheta ; \gamma) d\vartheta\right \}^{1/2},
\end{equation}
where
\begin{equation} \label{EH_9ABCD}
{\mathcal F}_{n,2}(\varphi, \vartheta ; \gamma)=\big [(n\cos^2 \vartheta -1)+n\gamma\cos \vartheta
\sin \vartheta\cos \varphi \big ]^{2} \cos^n\vartheta \sin^{n-2}\vartheta \sin ^{n-3}\varphi.
\end{equation}
The equalities (\ref{EH_9ABC}) and (\ref{EH_9ABCD}) imply
\begin{equation} \label{EH_11ABCD}
C_2=\frac{2\sqrt{\omega_{n-2}}}{\omega_n}\sup _{\gamma \geq 0}\;
\frac{1}{\sqrt{1+\gamma^2}}\left \{{\mathcal I}_1+ \gamma^2 {\mathcal I}_2 \right \}^{1/2},
\end{equation}
where
\begin{equation} \label{EH_9B}
{\cal I}_1=\int _ {0}^{\pi}\sin ^{n-3}\varphi\;d\varphi\int _ {0}^{\pi/2}(n\cos^2 \vartheta -1)^2\sin^{n-2}
\vartheta \cos^n \vartheta\;d\vartheta =
{ \sqrt{\pi}\; n(n-1)\Gamma\left ({n-2 \over 2} \right )
\Gamma\left ({n+1 \over 2} \right )\over 8(n-1)!},
\end{equation}
\begin{equation} \label{EH_9BA}
{\cal I}_2=n^2\int _ {0}^{\pi}\cos^2 \varphi \sin ^{n-3}\varphi\; d\varphi\int _ {0}^{\pi/2}\sin^{n}\vartheta \cos^{n+2} \vartheta\; d\vartheta=
{ \sqrt{\pi}\; n\Gamma\left ({n-2 \over 2} \right )\Gamma\left ({n+1 \over 2} \right )
\over 8(n-1)!}.
\end{equation}
By (\ref{EH_11ABCD}) we have
$$
C_2=\!\frac{2\sqrt{\omega_{n-2}}}{\omega_n}\max \big \{ {\cal I}_1^{1/2}, {\cal I}_2^{1/2} \big  \},
$$
which together with (\ref{EH_9B}) and (\ref{EH_9BA}) gives
$$
C_2=\!\frac{2\sqrt{\omega_{n-2}}}{\omega_n}\; {\cal I}_1^{1/2}=
\sqrt{(n-1)\Gamma\left ({n+2 \over 2} \right )\over 2^n \pi^{n/2}}.
$$
Hence (\ref{EH_2AC2}) follows.

Since $\bs z \in {\Bbb S}^{n-1}$ and the supremum in $\gamma =|\bs z'|/z_n$ in
(\ref{EH_9ABC}) is attained for $\gamma=0$, we have $C_2=C_2(\bs e_n)$.
\end{proof}

\setcounter{equation}{0}
\section{An auxilliary algebraic inequality}

Here we prove an algebraic inequality to be
used later for deriving an explicit formula for the sharp constant in the
estimate for the modulus of gradient in the case $p=\infty$.

\setcounter{theorem}{0}
\begin{lemma} \label{Lem_1} For all $x\geq 0$ and any $\mu\geq 1$
the inequality holds
\begin{equation} \label{E_ineq}
\left (\mu+1 \over \mu+x \right  )^{\mu-1}+\left( \mu +1\over 1+\mu x \right )^{\mu-1} x^{\mu+1}
\leq 2x+{\mu(3\mu+1)\over (\mu+1)^2}(1-x)^2 .
\end{equation}
The equality sign takes place only for $\mu=1$ or $x=1$.
\end{lemma}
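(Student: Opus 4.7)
The plan is to fix $\mu\ge 1$ and view the inequality as a single-variable statement in $x\ge 0$; write $L(x)$ for the left-hand side and $R(x)$ for the right-hand side. When $\mu=1$ both sides reduce identically to $1+x^{2}$, and at $x=1$ both sides equal $2$, so the two equality cases asserted by the lemma are immediate. From now on I assume $\mu>1$ and $x\neq 1$ and aim at strict inequality.

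The first reduction is to exploit a symmetry. Setting $\phi(x):=\left(\frac{\mu+1}{\mu+x}\right)^{\mu-1}$, a direct substitution in the second term of $L$ shows $L(x)=\phi(x)+x^{2}\phi(1/x)$, and hence $L(x)=x^{2}L(1/x)$. The same identity $R(x)=x^{2}R(1/x)$ follows at once from the quadratic form of $R$. Consequently $\Delta(x):=R(x)-L(x)$ satisfies $\Delta(x)=x^{2}\Delta(1/x)$, so it is enough to prove $\Delta\ge 0$ on the interval $(0,1]$; the range $x\ge 1$ then follows automatically.

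Next I Taylor-expand $\Delta$ at $x=1$. Differentiating the defining formulas yields $L(1)=R(1)=2$, $L'(1)=R'(1)=2$, and $L''(1)=R''(1)=2\mu(3\mu+1)/(\mu+1)^{2}$; the third derivatives satisfy $R'''(1)=0$ (since $R$ is quadratic in $x$) and a short but nontrivial polynomial cancellation gives $L'''(1)=0$ as well. Thus the coefficient $\mu(3\mu+1)/(\mu+1)^{2}$ in front of $(1-x)^{2}$ is forced as the unique constant for which $\Delta$ vanishes to third order at $x=1$. The fourth derivative turns out to be
$$\Delta^{(4)}(1)=\frac{2\mu^{2}(\mu-1)}{(\mu+1)^{3}}>0\qquad(\mu>1),$$
so $\Delta(x)\sim\frac{\mu^{2}(\mu-1)}{12(\mu+1)^{3}}(x-1)^{4}$ near $x=1$ and $\Delta>0$ on some one-sided neighbourhood of the tangency point.

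It remains to promote this local positivity to the whole interval $(0,1)$. I would first verify the boundary value $\Delta(0)=\mu(3\mu+1)/(\mu+1)^{2}-((\mu+1)/\mu)^{\mu-1}\ge 0$ using the bound $(1+1/\mu)^{\mu-1}<e$ together with the fact that $\mu(3\mu+1)/(\mu+1)^{2}$ increases on $[1,\infty)$ toward the limit $3$ (with equality $\Delta(0)=0$ only at $\mu=1$). Between $x=0$ and $x=1$ the plan is to show that $\Delta'$ has no zero on $(0,1)$ other than the triple zero inherited from $x=1$: this forces $\Delta$ to be monotone on $(0,1)$, and combined with $\Delta(0)\ge 0$ and $\Delta(1)=0$ it yields $\Delta\ge 0$ throughout. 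This last step is the main obstacle: the third-order tangency at $x=1$ rules out any crude convexity or monotonicity estimate, and I expect the proof to require clearing denominators in the original inequality and rewriting the resulting polynomial identity as a manifestly non-negative combination of powers of $(1-x)$ and $(\mu-1)$.
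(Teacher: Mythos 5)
Your symmetry observation $\Delta(x)=x^{2}\Delta(1/x)$ and the Taylor expansion at $x=1$ are correct and match the opening moves of the paper's proof, and the computed values $F(1)=2$, $F'(1)=2$, $F''(1)=2\mu(3\mu+1)/(\mu+1)^{2}$, $F'''(1)=0$ all check out. But the proof is not actually finished: the decisive step — showing $\Delta\ge0$ on all of $(0,1)$, not just near $x=1$ — is precisely where you stop and say you ``expect the proof to require'' some further manipulation. Observing that $\Delta$ vanishes to third order at $x=1$ with positive fourth derivative, that $\Delta(0)\ge0$, and then asserting that $\Delta'$ should have no interior zero, is just restating the goal; monotonicity of $\Delta$ on $(0,1)$ is not an easier claim than positivity of $\Delta$ itself, and you give no argument for it.

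The route you sketch for closing the gap — ``clearing denominators\ldots\ and rewriting the resulting polynomial identity as a manifestly non-negative combination of powers of $(1-x)$ and $(\mu-1)$'' — cannot work as stated, because for general real $\mu>1$ the left-hand side contains the factors $(\mu+x)^{-(\mu-1)}$, $(1+\mu x)^{-(\mu-1)}$ and $x^{\mu+1}$, which are not rational in $x$; there is no polynomial identity to clear denominators from. (Only for integer $\mu$ does the inequality become algebraic, which is the special case treated in the paper's Corollary 2.) The paper closes the gap by a quite different and genuinely transcendental argument: write $F(x)=F(1)+F'(1)(x-1)+\tfrac12F''(t)(x-1)^{2}$ with $t\in(x,1)$ by the Lagrange form of Taylor's remainder, so it suffices to show $F''(t)<F''(1)=2\mu(3\mu+1)/(\mu+1)^{2}$ for every $t\in(0,1)$. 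This is done by checking $F''(0)<F''(1)$ and then showing, via the formula for $F'''$ and an estimate on the resulting expression, that $F''$ cannot attain an interior maximum on $[0,1]$ exceeding its boundary values. That second-derivative bound is the core of the argument and is what your proposal is missing.
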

\begin{proof} Clearly,  the inequality (\ref{E_ineq}) becomes equality for
$\mu=1$ or $x=1$. Suppose that $\mu \in (1, \infty )$.

(i) {\it The case $0\leq x<1$}. Let us write (\ref{E_ineq}) in the form
$$
\left (\mu+1 \over  \mu+x \right  )^{\mu-1}+\left(\mu x+x \over 1+\mu x\right )^{\mu-1} x^2
\leq 2x+{\mu(3\mu+1)\over (\mu+1)^2}(1-x)^2 .
$$
Introducing the notation
\begin{equation} \label{EH_MH1}
F(x)=\left (\mu+1 \over \mu+x \right  )^{\mu-1}+
\left(\mu x+x \over 1+\mu x \right )^{\mu-1} x^2,
\end{equation}
we find
$$
F'(x)=-{\mu-1 \over \mu+1}\left (\mu+1 \over \mu+x \right  )^{\mu}+
x\left ( 2+{\mu-1 \over 1+\mu x} \right )\left(\mu x+x \over 1+\mu x \right )^{\mu-1},
$$
\begin{equation} \label{EH_MH3}
F''(x)={\mu(\mu-1) \over (\mu+1)^2}\left (\mu+1 \over  \mu+x \right  )^{\mu+1}+
\mu\left [ {2(x+1) \over 1+\mu x} +{\mu-1 \over (1+\mu x)^2}\right ]
\left(\mu x+x \over 1+\mu x \right )^{\mu-1},
\end{equation}
\begin{equation} \label{EH_MH4}
F'''(x)=-{\mu(\mu-1) \over (\mu+1)^2}\left (\mu+1 \over  \mu+x \right  )^{\mu+2}+
{\mu(\mu^2-1) \over x(1+\mu x)^3}\left(\mu x+x \over 1+\mu x \right )^{\mu-1}.
\end{equation}

By Taylor's formula with Lagrange's remainder term,
\begin{equation} \label{EH_MH5}
F(x)=F(1)+F'(1)(x-1)+{1 \over 2}F''(t)\;(x-1)^2=2+2(x-1)+
{1 \over 2}F''(t)\;(x-1)^2,
\end{equation}
where $x \in [0, 1)$ and $t \in (x, 1)$.

Note that $F''(0)<F''(1)$. In fact, by (\ref{EH_MH3}),
$$
F''(0) ={\mu-1 \over \mu+1}\left (1+{1 \over \mu} \right )^\mu,\;\;\;\;\;
F''(1) ={2\mu(3\mu+1) \over (\mu+1)^2},
$$
which together with the obvious inequality
$$
{\mu-1 \over \mu+1}\;e < {2\mu(3\mu+1) \over (\mu+1)^2}
$$
implies $F''(0)<F''(1)$.

Next we show that
\begin{equation} \label{EH_MH6}
F''(t) <\max \big \{ F''(0),\; F''(1)\big \} ={2\mu(3\mu+1) \over (\mu+1)^2}
\end{equation}
for any $t \in (0, 1)$.

Suppose the opposite assertion holds, i.e. there exists a point  $t \in (0, 1)$
at which (\ref{EH_MH6}) fails. Hence $F''(t)$ attains its maximum value on $[0, 1]$
at an inner point $\tau $, i.e.,
\begin{equation} \label{EH_MH7}
F''(\tau) =\max _{t \in [0; 1]}F''(t)\geq \max \big
\{ F''(0),\; F''(1)\big \}={2\mu(3\mu+1) \over (\mu+1)^2}.
\end{equation}
Taking into account (\ref{EH_MH4}), we have
$$
F'''(\tau )=-{\mu(\mu-1) \over (\mu+1)^2}\left (\mu+1 \over  \mu+\tau \right  )^{\mu+2}+
 {\mu(\mu^2-1) \over \tau(1+\mu\tau)^3}\left( \mu\tau+\tau \over 1+\mu\tau \right )^{\mu-1}=0,
$$
which is equivalent to
$$
\left( \mu\tau+\tau \over 1+\mu\tau \right )^{\mu-1}={\tau(1+\mu\tau)^3  \over (\mu+1)^3}
\left (\mu+1 \over \mu+\tau \right  )^{\mu+2}.
$$
Combined with (\ref{EH_MH3}), this implies
\begin{eqnarray*}
F''(\tau )&=&{\mu(\mu-1) \over (\mu+1)^2}\left (\mu+1 \over  \mu+\tau \right  )^{\mu+1}
\left \{1+ \tau(1+\mu\tau){2(1+\tau)(1+\mu\tau)+\mu-1 \over (\mu-1)(\mu+\tau)}\right \}\\
&<&{\mu(\mu-1) \over (\mu+1)^2}\left (\mu+1 \over  \mu+\tau \right  )^{\mu+1}
\left \{2+ {2(1+\tau)(1+\mu\tau) \over \mu-1}\right \}.
\end{eqnarray*}
Therefore,
\begin{eqnarray} \label{EH_MH9}
F''(\tau )&<&{2\mu \over (\mu+1)^2}\left ( \mu+1 \over  \mu+\tau \right  )^{\mu+1}
\Big \{(\mu-1)+(1+\tau)(1+\mu\tau) \Big \}\nonumber\\
& &\nonumber \\
&=&2\mu (\mu+1)^{\mu-1}\;
{\mu\tau^2+(\mu+1)\tau +\mu \over (\mu+\tau)^{\mu+1}}.
\end{eqnarray}
Setting
\begin{equation} \label{EH_MH10}
\eta(\tau)={\mu\tau^2 +(\mu+1)\tau+\mu \over (\mu+\tau)^{\mu+1}},
\end{equation}
we rewrite (\ref{EH_MH9}) as
\begin{equation} \label{EH_MH11}
F''(\tau )<2\mu (\mu+1)^{\mu-1}\eta(\tau ).
\end{equation}
Noting that
$$
\eta '(\tau )={\mu\tau(\mu-1)(1-\tau) \over (\mu+\tau)^{\mu+2}}>0
$$
for $0<\tau <1$, by (\ref{EH_MH10}) and (\ref{EH_MH11}), we find
$$
\max _{t \in [0; 1]}F''(t)=F''(\tau )<2\mu (\mu+1)^{\mu-1}\eta(1)
={2\mu(3\mu+1) \over (\mu+1)^2}.
$$
The latter contradicts (\ref{EH_MH7}) which proves (\ref{EH_MH6}) for all
$t \in (0, 1 )$. Thus, it follows from (\ref{EH_MH1}), (\ref{EH_MH5})
and (\ref{EH_MH6}) that
$$
\left (\mu +1 \over  \mu+x \right  )^{\mu-1}+\left( \mu x+x \over 1+\mu x \right )^{\mu-1} x^2
<2x+{\mu(3\mu+1) \over (\mu+1)^2}\;(x-1)^2
$$
for all $0\leq x<1$. This means that for $\mu>1$ and $0\leq x<1$ the
strict inequality (\ref{E_ineq}) holds.

(ii) {\it The case  $x>1$}. Since the function
$$
G(x)=\left (\mu+1 \over  \mu+x \right  )^{\mu-1}+
\left( \mu x+x \over 1+\mu x \right )^{\mu-1} x^2
-2x-{\mu(3\mu+1) \over (\mu+1)^2}\;(x-1)^2
$$
satisfies the equality
$$
G \left ({1 \over x} \right )={1 \over x^2}\; G(x),
$$
we have by part (i) that $G(x)< 0$ for $0\leq x< 1$ and hence $G(x)< 0$ for $x >1$.
Thus, the strict inequality (\ref{E_ineq}) holds for $\mu>1$ and $x>1$.
\end{proof}

\setcounter{theorem}{0}
\begin{corollary} \label{Cor_1} For all $y\geq 0$ and any natural $n\geq 2$
the inequality holds
\begin{equation} \label{EH_M13}
{\cal P}_n^2(y)+{\cal P}_n^2(-y) \leq {2n^2+4(n-1)(3n-2)y^2 \over n^n},
\end{equation}
where
\begin{equation} \label{EH_M14}
{\cal P}_n(y)={\left ( \sqrt{1+y^2}+y\right )^{n-1} \over \left ( 1+(n-1)
\Big (\sqrt{1+y^2 }+y\Big )^2\right )^{(n-2)/2}}.
\end{equation}
\end{corollary}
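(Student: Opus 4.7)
My plan is to reduce the corollary directly to Lemma \ref{Lem_1} by a change of variable. The natural substitution is suggested by the appearance of $\sqrt{1+y^2}\pm y$ in $\mathcal{P}_n(\pm y)$: set
$$
t=\sqrt{1+y^2}+y\ge 1,\qquad \text{so that}\qquad \frac{1}{t}=\sqrt{1+y^2}-y,
$$
which immediately gives the two identities $t-\tfrac{1}{t}=2y$ and $(t+\tfrac1t)^2=4(1+y^2)$. In particular $4y^2=(t-1/t)^2=(1-t^2)^2/t^2$, which is exactly the combination that will appear on the majorant side after multiplying Lemma \ref{Lem_1} by a suitable factor.

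Next I would rewrite the two terms on the left. By construction,
$$
\mathcal{P}_n(y)=\frac{t^{n-1}}{\bigl(1+(n-1)t^{2}\bigr)^{(n-2)/2}},
$$
while for $\mathcal{P}_n(-y)$ a short simplification (multiplying numerator and denominator inside the bracket by $t^2$) yields
$$
\mathcal{P}_n(-y)=\frac{(1/t)^{n-1}}{\bigl(1+(n-1)/t^{2}\bigr)^{(n-2)/2}}=\frac{1}{t\bigl(t^{2}+n-1\bigr)^{(n-2)/2}}.
$$
Squaring and multiplying by $n^n$ produces
$$
n^n\bigl(\mathcal{P}_n^2(y)+\mathcal{P}_n^2(-y)\bigr)=\frac{n^n\, t^{2n-2}}{\bigl(1+(n-1)t^{2}\bigr)^{n-2}}+\frac{n^n}{t^{2}\bigl(t^{2}+n-1\bigr)^{n-2}}.
$$

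Now I would apply Lemma \ref{Lem_1} with $\mu=n-1\ge 1$ (valid since $n\ge 2$) and $x=t^{2}\ge 0$, obtaining
$$
\Bigl(\tfrac{n}{n-1+t^{2}}\Bigr)^{n-2}+\Bigl(\tfrac{n}{1+(n-1)t^{2}}\Bigr)^{n-2} t^{2n}\le 2t^{2}+\frac{(n-1)(3n-2)}{n^{2}}\,(1-t^{2})^{2}.
$$
Multiplying both sides of this inequality by $n^{2}/t^{2}$ turns the left side into exactly the expression displayed above for $n^n(\mathcal{P}_n^2(y)+\mathcal{P}_n^2(-y))$, while the right side becomes
$$
2n^{2}+(n-1)(3n-2)\,\frac{(1-t^{2})^{2}}{t^{2}}=2n^{2}+4(n-1)(3n-2)\,y^{2},
$$
using the identity $(1-t^{2})^{2}/t^{2}=4y^{2}$ derived at the outset. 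Dividing through by $n^n$ yields (\ref{EH_M13}).

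There is essentially no obstacle here beyond spotting the right substitution: the whole content of the corollary is encoded in Lemma \ref{Lem_1}, and the only nontrivial move is recognizing that the proper choice is $\mu=n-1$, $x=t^{2}$ with $t=\sqrt{1+y^2}+y$. Once this is done, the rescaling factor $n^{2}/t^{2}$ matches both sides simultaneously, so the argument is purely algebraic.
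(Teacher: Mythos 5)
Your proof is correct and takes essentially the same route as the paper: both reduce Corollary~\ref{Cor_1} to Lemma~\ref{Lem_1} with $\mu=n-1$ via the substitution tying $y$ to $\sqrt{1+y^2}\pm y$, the only difference being that you plug in $x=t^2\ge 1$ while the paper uses the reciprocal $x=\bigl(\sqrt{1+y^2}+y\bigr)^{-2}\le 1$ (equivalently, they divide by $x$ where you multiply by $n^2/t^2$), so the two terms simply pair with $\mathcal P_n^2(\pm y)$ in the opposite order.
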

\begin{proof}
Suppose that $0<x \leq 1$. We introduce the new variable
\begin{equation} \label{EH_M14A}
y={1-x \over 2\sqrt{x}} \in [0, \infty).
\end{equation}
Solving the equation $y=(x^{-1/2}-x^{1/2})/2$ in $\sqrt{x}$,
we find $\sqrt{x}=\sqrt{1+y^2}-y$, i.e.,
$$
x=\Big (\sqrt{1+y^2 }-y\Big )^2={1 \over \Big (\sqrt{1+y^2 }+y\Big )^2},
$$

Putting $\mu=n-1$, we write (\ref{E_ineq}) as
\begin{equation} \label{EH_M15A}
{1\over \big (n-1+x \big )^{n-2}}+{x^n \over \big(1+(n-1 )x \big )^{n-2}}
\leq {2n^2x+(n-1)(3n-2)(1-x)^2 \over n^n}.
\end{equation}
By (\ref{EH_M14A}) we have $(1-x)^2=4y^2x$, hence (\ref{EH_M15A}) can be rewritten
in the form
\begin{equation} \label{EH_M16A}
{1\over x\big (n-1+x \big )^{n-2}}+{x^{n-1} \over \big(1+(n-1 )x \big )^{n-2}}
\leq {2n^2+4(n-1)(3n-2)y^2 \over n^n}.
\end{equation}
Setting $x=\Big (\sqrt{1+y^2 }+y\Big )^{-2}$ in the first term on the
left-hand side of (\ref{EH_M16A}), we obtain
\begin{equation} \label{EH_M17A}
{1\over x\big (n-1+x \big )^{n-2}}={\left ( \sqrt{1+y^2}+y\right )^{2n-2} \over \left ( 1+(n-1)
\Big (\sqrt{1+y^2 }+y\Big )^{2}\right )^{n-2}}={\cal P}_n^2(y).
\end{equation}
Similarly, putting $x=\Big (\sqrt{1+y^2 }-y\Big )^{2}$ in the second term on
the left-hand side of (\ref{EH_M16A}), we find
\begin{equation} \label{EH_M18A}
{x^{n-1} \over \big(1+(n-1 )x \big )^{n-2}}={\left ( \sqrt{1+y^2}-y\right )^{2n-2} \over \left ( 1+(n-1)
\Big (\sqrt{1+y^2 }-y\Big )^{2}\right )^{n-2}}={\cal P}_n^2(-y).
\end{equation}
Using (\ref{EH_M17A}) and (\ref{EH_M18A}), we can rewrite (\ref{EH_M16A})
as (\ref{EH_M13}).
\end{proof}

We give one more corollary of Lemma \ref{Lem_1} containing an alternative form of (\ref{E_ineq})
with natural $\mu \geq 2$. However, we are not going to use it henceforth.
\begin{corollary} \label{Cor_2} For all $x\geq 0$ and any natural $n\geq 2$
the inequality holds
\begin{equation} \label{EH_RA}
\sum_{k=3}^{n+1}\Big ( \begin{array}{c}n+1\\k \end{array}\Big ) \left \{
{1 \over \big (n+x \big )^{k-2}}+{(-1)^k \over \big (1+nx \big )^{k-2}}\right \}(1-x)^k\leq 0\;.
\end{equation}
The equality sign takes place only for $x=1$.
\end{corollary}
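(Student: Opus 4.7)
The plan is to show that Corollary \ref{Cor_2} is an algebraic rearrangement of Lemma \ref{Lem_1} applied with $\mu=n$ (so $\mu\geq 2$ is indeed natural), obtained by expanding each side binomially in powers of $(1-x)$ after clearing denominators.

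First, take Lemma \ref{Lem_1} with $\mu=n$ and multiply both sides of (\ref{E_ineq}) by $(n+1)^2$. This turns the left-hand side into $(n+1)^{n+1}/(n+x)^{n-1}+(n+1)^{n+1}x^{n+1}/(1+nx)^{n-1}$ and the right-hand side into $2x(n+1)^2+n(3n+1)(1-x)^2$. The two key identities I would use are
$$n+1=(n+x)+(1-x),\qquad (n+1)x=(1+nx)-(1-x),$$
which allow the binomial expansions
$$(n+1)^{n+1}=\sum_{k=0}^{n+1}\binom{n+1}{k}(n+x)^{n+1-k}(1-x)^{k},$$
$$[(n+1)x]^{n+1}=\sum_{k=0}^{n+1}(-1)^{k}\binom{n+1}{k}(1+nx)^{n+1-k}(1-x)^{k}.$$
Dividing the first by $(n+x)^{n-1}$ and the second by $(1+nx)^{n-1}$ produces terms $(n+x)^{2-k}(1-x)^{k}$ and $(-1)^{k}(1+nx)^{2-k}(1-x)^{k}$, which for $k\geq 3$ are exactly the summands appearing in (\ref{EH_RA}).

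It then remains to verify that the $k=0,1,2$ contributions on the left side match the right-hand side $2x(n+1)^2+n(3n+1)(1-x)^2$ exactly. A direct computation in the variable $y=1-x$ gives
\begin{align*}
k=0:&\quad (n+x)^{2}+(1+nx)^{2}=2(n+1)^2-2(n+1)^2 y+(n^2+1)y^{2},\\
k=1:&\quad (n+1)(1-x)\bigl[(n+x)-(1+nx)\bigr]=(n^2-1)y^{2},\\
k=2:&\quad 2\binom{n+1}{2}(1-x)^{2}=n(n+1)y^{2}.
\end{align*}
Summing the constants and linear-in-$y$ pieces gives $2(n+1)^2-2(n+1)^2y=2x(n+1)^2$, while the quadratic coefficients add to $(n^2+1)+(n^2-1)+n(n+1)=3n^2+n=n(3n+1)$. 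Hence the $k\leq 2$ part cancels the right-hand side precisely, and what survives of Lemma \ref{Lem_1} after this cancellation is exactly inequality (\ref{EH_RA}).

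The equality clause follows automatically: Lemma \ref{Lem_1} asserts equality only when $\mu=1$ or $x=1$, and since here $\mu=n\geq 2$, the only equality case is $x=1$. No new obstacle arises here; the one place where I would be careful is bookkeeping the sign $(-1)^{k}$ coming from $(n+1)x=(1+nx)-(1-x)$, which is exactly what supplies the alternation in the second bracket of (\ref{EH_RA}) and which must also be tracked in the $k=1$ cancellation above.
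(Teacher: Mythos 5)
Your proposal is correct and follows exactly the paper's own proof: set $\mu=n$ in Lemma \ref{Lem_1}, multiply by $(n+1)^2$, binomially expand $(n+1)^{n+1}=[(n+x)+(1-x)]^{n+1}$ and $[(n+1)x]^{n+1}=[(1+nx)-(1-x)]^{n+1}$, and observe that the $k=0,1,2$ terms cancel the right-hand side. Your explicit verification of the cancellation (constants $2(n+1)^2$, linear $-2(n+1)^2y$, quadratic coefficient $(n^2+1)+(n^2-1)+n(n+1)=n(3n+1)$) is a welcome bit of detail that the paper leaves implicit, but the underlying argument is the same.
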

\begin{proof}
We set $\mu=n, n \geq 2$, in (\ref{E_ineq}):
$$
\left (n+1 \over n+x \right  )^{n-1}+\left(n+1\over 1+nx \right )^{n-1} x^{n+1}
\leq 2x+{n(3n+1)\over (n+1)^2}(1-x)^2 .
$$
Multiplying the last inequality by $(n+1)^2$, we write it as
\begin{equation} \label{EH_MA}
{(n+1)^{n+1} \over \big ( n+x \big )^{n-1}}+{\big( (n+1)x \big )^{n+1}
\over \big (1+nx \big )^{n-1}}-2(n+1)^2 x-n(3n+1)(1-x)^2 \leq 0.
\end{equation}

We rewrite the first term in (\ref{EH_MA}):
\begin{eqnarray} \label{EQ_2}
& &{(n+1)^{n+1} \over \big ( n+x \big )^{n-1}}={[(n+x)+(1-x)]^{n+1} \over \big ( n+x\big )^{n-1}}=
{1 \over \big ( n+x\big )^{n-1}}\sum_{k=0}^{n+1}
\Big ( \begin{array}{c}n+1\\k \end{array}\Big ) (n+x)^{n+1-k}(1-x)^k \nonumber\\
& &=(n+x)^2+(n+1)(n+x)(1-x) +{n(n+1) \over 2}(1-x)^2 +
\sum_{k=3}^{n+1}\Big ( \begin{array}{c}n+1\\k \end{array}\Big ){(1-x)^k \over (n+x)^{k-2}}.
\end{eqnarray}
Similarly, the second term in (\ref{EH_MA}) can be written as
\begin{eqnarray} \label{EQ_3}
& &\hspace{-5mm}{\big( (n+1)x \big )^{n+1} \over \big (1+nx \big )^{n-1}}=
{[(1+nx)-(1-x)]^{n+1} \over \big (1+nx \big )^{n-1}}=
{1 \over \big (1+nx \big )^{n-1}}\sum_{k=0}^{n+1}(-1)^k
\Big ( \begin{array}{c}n+1\\k \end{array}\Big )
\big (1+nx \big )^{n+1-k}(1-x)^k \nonumber\\
& &\hspace{-5mm}=\big (1+nx \big )^2-(n+1)\big (1+nx \big )(1-x) +{n(n+1) \over 2}(1-x)^2
+\sum_{k=3}^{n+1}(-1)^k
\Big ( \begin{array}{c}n+1\\k \end{array}\Big ){(1-x)^k \over \big (1+nx \big )^{k-2}}.
\end{eqnarray}

Using (\ref{EQ_2}) and (\ref{EQ_3}) in (\ref{EH_MA}), we arrive at (\ref{EH_RA})
with the left-hand side as the sum of rational functions.
\end{proof}

\setcounter{equation}{0}
\section{The case $p=\infty$}

The next assertion is the main theorem of this paper.
It is based on the representation for the sharp constant $C_p$ ($1<p<\infty$)
obtained in Proposition \ref{SP1_1}. To find the explicit formula for $C_\infty$
we solve an extremal problem with a scalar parameter entering the integrand
in a double integral.

\setcounter{theorem}{0}
\begin{theorem} \label{SP_2} Let $u \in h^\infty({\mathbb R}^n_{+})$,
and let  $x $ be  an arbitrary point in ${\mathbb R}^n _+$.
The sharp coefficient ${\mathcal C}_p (x)$ in the inequality
\begin{equation} \label{EH_X}
|\nabla u(x)|\leq {\mathcal C}_\infty( x)\big|\!\big |u \big |\!\big |_\infty
\end{equation}
is given by
\begin{equation} \label{EH_Y}
{\mathcal C}_\infty( x)=\frac{4(n-1)^{(n-1)/2}\;\omega_{n-1}}{ n^{n/2}\;\omega_n x_n}\;.
\end{equation}

For $p=\infty$ the absolute value of the derivative of a harmonic function $u$
with respect to the normal to the boundary of the half-space at any
$x \in {\Bbb R}^n_+$ has the same supremum as $|\nabla u(x)|$.
\end{theorem}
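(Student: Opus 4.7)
The plan is to extend the derivation of Proposition~\ref{SP1_1} to $p=\infty$ and then to invoke the algebraic inequality of Corollary~\ref{Cor_1} to evaluate the resulting one-parameter extremal problem. Starting from (\ref{EH_3H}) in Lemma~\ref{L1_1}, decompose $\bs z=z_n\bs e_n+\bs z'$ with $\gamma=|\bs z'|/z_n$, parameterize $\bs\sigma\in{\mathbb S}^{n-1}_+$ by a polar angle $\vartheta\in[0,\pi/2]$ and an azimuth $\varphi\in[0,\pi]$, and apply the same spherical-integration formula used in the passage from (\ref{EH_3DA}) to (\ref{EH_1ABC}). This should yield
\begin{equation*}
C_\infty=\frac{2\omega_{n-2}}{\omega_n}\sup_{\gamma\geq 0}\frac{1}{\sqrt{1+\gamma^2}}\int_0^{\pi}d\varphi\int_0^{\pi/2}{\mathcal F}_{n,\infty}(\varphi,\vartheta;\gamma)\,d\vartheta,
\end{equation*}
where ${\mathcal F}_{n,\infty}=|(n\cos^2\vartheta-1)+n\gamma\cos\vartheta\sin\vartheta\cos\varphi|\sin^{n-2}\vartheta\sin^{n-3}\varphi$ is the formal limit of ${\mathcal F}_{n,p}$ in (\ref{EH_1AC}).

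The inner $\varphi$-integration is to be handled in closed form. Writing $A(\vartheta)=n\cos^2\vartheta-1$, $B(\vartheta,\gamma)=n\gamma\cos\vartheta\sin\vartheta$, and substituting $u=\cos\varphi$, the inner integral becomes $\int_{-1}^{1}|A+Bu|(1-u^2)^{(n-4)/2}du$. In the region $|A|\geq B$ the integrand keeps its sign and the $Bu$-term vanishes by symmetry; in the complementary region the integral splits at $u=-A/B$ and each piece is elementary. The key step is then to make a substitution of the form $y=(1-x)/(2\sqrt{x})$, with $x$ a suitable function of $\vartheta$, so that the remaining integrand in $\vartheta$ reorganizes into precisely the combination ${\mathcal P}_n^2(y)+{\mathcal P}_n^2(-y)$ appearing on the left-hand side of (\ref{EH_M13}). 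Corollary~\ref{Cor_1} then delivers
\begin{equation*}
\frac{1}{\sqrt{1+\gamma^2}}\int_0^{\pi}d\varphi\int_0^{\pi/2}{\mathcal F}_{n,\infty}(\varphi,\vartheta;\gamma)\,d\vartheta\leq \int_0^{\pi}d\varphi\int_0^{\pi/2}{\mathcal F}_{n,\infty}(\varphi,\vartheta;0)\,d\vartheta,
\end{equation*}
with equality at $\gamma=0$. This reduces the variational problem to the explicit evaluation of the $\gamma=0$ integrand, which splits $|n\cos^2\vartheta-1|$ at $\cos\vartheta=1/\sqrt{n}$ and produces the factor $(n-1)^{(n-1)/2}/n^{n/2}$; combining with $\omega_{n-2}\int_0^{\pi}\sin^{n-3}\varphi\,d\varphi=\omega_{n-1}$ yields (\ref{EH_Y}).

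Because the supremum in $\gamma$ is attained at $\gamma=0$, i.e.\ at $\bs z=\bs e_n$, one has $C_\infty=C_\infty(\bs e_n)$, which is exactly the final assertion that the sharp constant in the pointwise estimate of the normal derivative coincides with the one for $|\nabla u|$. The main obstacle is recognizing, after the sign-case analysis of the $\varphi$-integration, that the correct substitution in $\vartheta$ brings the resulting integrand into the shape ${\mathcal P}_n^2(y)+{\mathcal P}_n^2(-y)$: the sum of squares on the left of (\ref{EH_M13}) must match the two pieces coming from the $|A|\geq B$ and $|A|<B$ regimes, and the factor $(1+\gamma^2)^{-1/2}$ must absorb correctly into the $y$-substitution so that the linear-in-$y^2$ right-hand side of (\ref{EH_M13}) becomes constant in $\gamma$. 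Once this structural identification is made, Corollary~\ref{Cor_1} does the real work, and the remainder is careful bookkeeping with beta functions and measures of spheres.
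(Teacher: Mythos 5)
Your high-level plan (limit $p\to\infty$, reduce to a one-parameter extremal problem in $\gamma$, invoke Corollary~\ref{Cor_1}, show the optimum is at $\gamma=0$) matches the paper, but the route you sketch to get there has several genuine gaps.

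First, you propose to do the $\varphi$-integration first, substituting $u=\cos\varphi$ and handling $\int_{-1}^1|A+Bu|(1-u^2)^{(n-4)/2}\,du$ by the sign cases $|A|\gtrless B$. The pieces are not elementary for general $n$: they involve the incomplete beta integral $\int(1-u^2)^{(n-4)/2}\,du$ cut at $u=-A/B$, and this does not close up. The paper instead integrates in $\vartheta$ first, where the integrand has the exact antiderivative $[\cos\vartheta+\gamma\cos\varphi\sin\vartheta]\sin^{n-1}\vartheta$, locates the zero $\vartheta_\gamma(\varphi)$ via a quadratic in $\tan\vartheta$, and exploits the identity $\int_0^{\pi}\int_0^{\pi/2}[\cdots]\,d\vartheta\,d\varphi=0$ to convert the absolute value to twice the positive-part integral. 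It is this $\vartheta$-integration — not a substitution of the form $y=(1-x)/(2\sqrt x)$ applied to a $\vartheta$-integrand — that produces the function $\mathcal{P}_n$ after the change $t=\cos\varphi$.

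Second, and more seriously, you assert that the remaining $\vartheta$-integrand "reorganizes into precisely the combination $\mathcal{P}_n^2(y)+\mathcal{P}_n^2(-y)$" and that Corollary~\ref{Cor_1} then directly delivers the claimed inequality between the $\gamma$ and $\gamma=0$ integrals. This skips the essential Cauchy--Schwarz step. After the $\vartheta$-integration and the split $\int_{-1}^{1}\to\int_0^1$, the paper arrives at the integral of the \emph{sum} $\mathcal{P}_n(\alpha t)+\mathcal{P}_n(-\alpha t)$ against the weight $(1-t^2)^{(n-4)/2}$, to the first power, not the sum of squares. Only after applying the Cauchy--Schwarz inequality does the \emph{square} $(\mathcal{P}_n(\alpha t)+\mathcal{P}_n(-\alpha t))^2$ appear, and its expansion produces a cross term $2\mathcal{P}_n(\alpha t)\mathcal{P}_n(-\alpha t)$ that Corollary~\ref{Cor_1} does not control; the paper handles it with the separate identity $\mathcal{P}_n(y)\mathcal{P}_n(-y)=\big(4(n-1)y^2+n^2\big)^{(2-n)/2}\le n^{2-n}$. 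Finally, the resulting upper bound is not constant in $\gamma$: after combining the two bounds and integrating, one is left with $\sup_{\alpha\ge 0}\big((n^2+(3n-2)\alpha^2)/(n^2+4(n-1)\alpha^2)\big)^{1/2}$, and one must verify separately that this ratio is non-increasing in $\alpha$ (which it is, since its derivative has sign $-(n-2)$) to conclude the sup sits at $\alpha=0$. As written, your argument would not produce a proof without filling in all of these steps, and the claim about the two sign-regimes matching the two squares in (\ref{EH_M13}) appears to be an incorrect guess rather than a verified identification.
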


\begin{proof} We pass to the limit as $p \rightarrow \infty $ in (\ref{EH_2_3}),
(\ref{EH_2A_3A}), (\ref{EH_1ABC}) and (\ref{EH_1AC}). This results in
\begin{equation} \label{EH_inf}
{\cal C}_\infty (x)=C_\infty \;x_n^{-1}\;,
\end{equation}
where
\begin{equation} \label{EH_inf1}
C_\infty=\sup _{\gamma \geq 0}\;
{2 \omega _{n-2} \over \omega _n\sqrt{1+\gamma^2}}\int _ {0}^{\pi}
\sin ^{n-3}\varphi\;d\varphi\int _ {0}^{\pi/2}
\;  \big |(n\cos^2 \vartheta -1)+n\gamma\cos \vartheta
\sin \vartheta\cos \varphi \big | \sin^{n-2}\vartheta\;d\vartheta.
\end{equation}

We are looking for a solution of the equation
\begin{equation} \label{EH_inf2}
(n\cos^2 \vartheta -1)+n\gamma\cos \vartheta \sin \vartheta\cos \varphi =0
\end{equation}
as a  function $\vartheta$ of $\varphi$. We can rewrite  (\ref{EH_inf2}) as the
second order equation in $\tan \vartheta$:
$$
\tan^2\vartheta -n\gamma \cos \varphi\tan \vartheta  +1-n=0.
$$
Since $0\leq \vartheta \leq \pi/2$, we find that the nonnegative root of this equation is
\begin{equation} \label{EH_inf3}
\vartheta _\gamma(\varphi)=\arctan \left ( {n\gamma \cos \varphi + \sqrt{4(n-1)+n^2\gamma ^2\cos ^2\varphi}\over 2}\right ).
\end{equation}
Taking into account that the left-hand side of (\ref{EH_inf2}) is nonnegative for
$0\leq \vartheta\leq \vartheta _\gamma(\varphi),\; 0\leq \varphi \leq \pi$, and
using the equalities
$$
\int _ {0}^{\vartheta}\big [(n\cos^2 \vartheta -1)+n\gamma\cos \varphi\cos \vartheta
\sin \vartheta \big ] \sin^{n-2}\vartheta\;d\vartheta=[\cos \vartheta + \gamma
\cos \varphi\sin \vartheta ]\sin^{n-1}\vartheta \;,
$$
$$
\int _ {0}^\pi\sin ^{n-3}\varphi\;d\varphi\int _ {0}^{\pi/2} \;\big [(n\cos^2 \vartheta -1)+n\gamma\cos \varphi\cos \vartheta
\sin \vartheta \big ] \sin^{n-2}\vartheta  d\vartheta=
\gamma\int_0^\pi\sin^{n-3}\varphi \cos \varphi d\varphi=0,
$$
we write (\ref{EH_inf1}) as
\begin{eqnarray} \label{EH_inf4}
C_\infty&=&\sup _{\gamma \geq 0}\;
{4 \omega _{n-2} \over \omega _n\sqrt{1+\gamma^2}}\int _ {0}^{\pi}\sin ^{n-3}\varphi \;d\varphi
\int _ {0}^{\vartheta_\gamma(\varphi )}
\;  \big [(n\cos^2 \vartheta -1)+n\gamma\cos \varphi\cos \vartheta
\sin \vartheta \big ] \sin^{n-2}\vartheta  d\vartheta\nonumber\\
& &\nonumber\\
&=&
\sup _{\gamma \geq 0}\;{4 \omega _{n-2} \over \omega _n\sqrt{1+\gamma^2}}\int _ {0}^{\pi}\;
\big [\cos \vartheta_\gamma(\varphi )+\gamma\cos \varphi
\sin \vartheta _\gamma(\varphi) \big ] \sin^{n-1}\vartheta _\gamma(\varphi)\sin ^{n-3}\varphi\;d\varphi .
\end{eqnarray}

By (\ref{EH_inf3}),
\begin{equation} \label{EH_sin}
\sin \vartheta _\gamma(\varphi)={n\gamma \cos \varphi + \sqrt{4(n-1)+n^2\gamma ^2\cos ^2\varphi}
\over \sqrt{4+\Big ( n\gamma \cos \varphi + \sqrt{4(n-1)+n^2\gamma ^2\cos ^2\varphi }\;\Big )^2}}\; ,
\end{equation}
\begin{equation} \label{EH_cos}
\cos \vartheta _\gamma(\varphi)={2 \over \sqrt{4+\Big ( n\gamma \cos \varphi + \sqrt{4(n-1)+n^2
\gamma ^2\cos ^2\varphi }\;\Big )^2} }\; .
\end{equation}
By (\ref{EH_sin}) and (\ref{EH_cos}), we find
\begin{equation} \label{EH_comb}
\cos \vartheta_\gamma(\varphi )+\gamma \cos \varphi \sin \vartheta _\gamma(\varphi)=
{2+ \gamma \cos \varphi \left (n\gamma \cos \varphi + \sqrt{4(n-1)+
n^2\gamma ^2\cos ^2\varphi }\right ) \over \sqrt{4+\left ( n\gamma \cos \varphi + \sqrt{4(n-1)+n^2\gamma ^2\cos ^2\varphi }\;\right )^2}}\;.
\end{equation}
Using (\ref{EH_sin}), (\ref{EH_comb}), and the identity
$$
2\!+\! \gamma \cos \varphi \left (n\gamma \cos \varphi \!+\! \sqrt{4(n\!-\!1)\!+\!
n^2\gamma ^2\cos ^2\varphi }\right )={4\!+\!\left ( n\gamma \cos \varphi \!+\! \sqrt{4(n\!-\!1)\!+\!n^2\gamma ^2\cos ^2\varphi }\;\right )^2 \over 2n}\;,
$$
we can write (\ref{EH_inf4}) as
$$
C_\infty=\sup _{\gamma \geq 0}\;
{2 \omega _{n-2} \over n\omega _{n}\sqrt{1+\gamma^2}}\int _ {0}^{\pi}
{\left (n\gamma \cos \varphi + \sqrt{4(n-1)+n^2\gamma ^2\cos^2\varphi}\right )^{n-1}
\over \left ( 4+\Big ( n\gamma \cos \varphi + \sqrt{4(n-1)+n^2
\gamma ^2\cos ^2\varphi }\Big )^2\right )^{(n-2)/2}}\;\sin^{n-3}\varphi\;d\varphi.
$$
Introducing the parameter
$$
\alpha={n\gamma \over 2\sqrt{n-1} },
$$
we obtain
\begin{equation} \label{EH_inf6}
C_\infty=\sup _{\alpha \geq 0}\;{4\omega _{n-2}(n-1)^{(n-1)/2} \over \omega_{n}\sqrt{n^2+4(n-1)\alpha^2}}
\int _ {0}^{\pi}{\cal P}_n\big ( \alpha \cos \varphi \big )\sin^{n-3}\varphi\;d\varphi\;,
\end{equation}
with ${\cal P}_n$ defined by (\ref{EH_M14}).
The change of variable $t=\cos \varphi$ in (\ref{EH_inf6}) implies
\begin{equation} \label{EH_inf7}
C_\infty=\sup _{\alpha \geq 0}\;
{4\omega _{n-2}(n-1)^{(n-1)/2} \over \omega _{n}\sqrt{n^2+4(n-1)\alpha^2}}\int _ {-1}^{1}
{\cal P}_n(\alpha t)(1-t^2)^{(n-4)/2}\;dt.
\end{equation}
Integrating in (\ref{EH_inf7}) over $(-1, 0)$ and $(0, 1)$, we have
$$
C_\infty=\sup _{\alpha \geq 0}\;
{4\omega _{n-2}(n-1)^{(n-1)/2} \over \omega _{n}\sqrt{n^2+4(n-1)\alpha^2}}\int _ {0}^{1}
{ {\cal P}_n(\alpha t) + {\cal P}_n(-\alpha t) \over (1-t^2)^{(4-n)/2}}\;dt.
$$

Applying the Schwarz inequality, we see that
$$
C_\infty \leq \sup _{\alpha \geq 0}\;
{4\omega _{n-2}(n-1)^{(n-1)/2} \over \omega _{n}\sqrt{n^2+4(n-1)\alpha^2}}\;\left \{ \int _ {0}^{1}
\frac{\big ( {\mathcal P}_n(\alpha t) \!+\!{\mathcal P}_n(-\alpha t)\big )^2 }
{(1-t^2)^{(4-n)/2}}\;dt \right \}^{1/2}\!\!
\left \{ \int _ {0}^{1}\!{dt \over(1-t^2)^{(4-n)/2}}\; \right \}^{1/2}.
$$
Hence, taking into account the identity
\begin{equation} \label{EH_IN1}
\int _ {0}^{1}\!(1-t^2)^{(n-4)/2}\;dt=
{ \sqrt{\pi}\;\Gamma\left ({n-2 \over 2} \right )
\over 2\Gamma\left ({n-1 \over 2} \right )},
\end{equation}
we obtain
\begin{equation} \label{EH_INEQ}
C_\infty \leq \sup _{\alpha \geq 0}\;
{4\omega _{n-2}(n-1)^{(n-1)/2} \over \omega _{n}\sqrt{n^2+4(n-1)\alpha^2}}\;\left ( { \sqrt{\pi}\;\Gamma\left ({n-2 \over 2} \right )
\over 2\Gamma\left ({n-1 \over 2} \right )}\right )^{1/2}\left \{ \int _ {0}^{1}
\frac{\big ( {\mathcal P}_n(\alpha t) \!+\!{\mathcal P}_n(-\alpha t)\big )^2 }
{(1-t^2)^{(4-n)/2}}\;dt \right \}^{1/2}.
\end{equation}
By (\ref{EH_M14}),
$$
{\mathcal P}_n(y){\mathcal P}_n(-y)=\Big (4(n-1)y^2 +n^2 \Big )^{(2-n)/2},
$$
which implies
\begin{equation} \label{EH_P}
{\mathcal P}_n(y){\mathcal P}_n(-y)\leq  n^{2-n}.
\end{equation}
Combining  (\ref{EH_P}) and (\ref{EH_M13}), we obtain
$$
\big ({\mathcal P}_n(y)+{\mathcal P}_n(-y)\big )^2 \leq
\;{\mathcal P}_n^2(y)+{\mathcal P}_n^2(-y)+2 n^{2-n} \leq
{2n^2+4(n-1)(3n-2)y^2 \over n^n}+{2 \over n^{n-2}}.
$$
Therefore,
\begin{equation} \label{EH_INEQ1}
\big ({\mathcal P}_n(\alpha t)+{\mathcal P}_n(-\alpha t)\big )^2 \leq
{4 \over n^{n-2}}\left (1+{(n-1)(3n-2) \over n^2}\;\alpha^2 t^2 \right ).
\end{equation}

By (\ref{EH_IN1}), (\ref{EH_INEQ}), (\ref{EH_INEQ1}) and by
$$
\int _ {0}^{1}\!t^2(1-t^2)^{(n-4)/2}\;dt=
{ \sqrt{\pi}\;\Gamma\left ({n-2 \over 2} \right )
\over 2(n-1)\Gamma\left ({n-1 \over 2} \right )},
$$
we find
\begin{equation} \label{EH_INEQ2}
C_\infty \leq {4\omega _{n-2}\sqrt{\pi}\;(n-1)^{(n-1)/2}\Gamma\left ({n-2 \over 2} \right ) \over \omega _{n}n^{n/2}\Gamma\left ({n-1 \over 2} \right )}\;\sup _{\alpha \geq 0}\;
\left ( {n^2+(3n-2)\alpha^2 \over n^2+4(n-1)\alpha^2}\right )^{1/2}.
\end{equation}
Note that
$$
{d\over d \alpha}\left ({n^2+(3n-2)\alpha^2 \over n^2+4(n-1)\alpha^2} \right )=
-\frac{2 \alpha(n-2)n^2}{\big ( n^2+4(n-1)\alpha^2 \big )^2}<0 \;\;\mbox{for}\;\; \alpha>0,
$$
therefore the supremum in $\alpha $ on the right-hand side of (\ref{EH_INEQ2})
is attained for $\alpha=0$. Thus,
\begin{equation} \label{EH_inf16}
C_\infty \leq {4\omega _{n-2}\sqrt{\pi}\;(n-1)^{(n-1)/2}
\Gamma\left ({n-2 \over 2} \right ) \over \omega _nn^{n/2}\Gamma\left ({n-1 \over 2} \right )}
= \frac{4(n-1)^{(n-1)/2}\;\omega_{n-1}}{ n^{n/2}\;\omega_n}.
\end{equation}

Besides, in view of (\ref{EH_inf6}) and (\ref{EH_M14}),
\begin{eqnarray*}
C_\infty &\geq& {4\omega _{n-2}(n-1)^{(n-1)/2} \over n\omega _n}
\int _ {0}^{\pi}{\cal P}_n(0)\sin^{n-3}\varphi\;d\varphi
={4\omega _{n-2}(n-1)^{(n-1)/2} \over n\omega _n}
\int _ {0}^{\pi}{\sin^{n-3}\varphi\over n^{(n-2)/2}}\;d\varphi\\
& &\\
&=&{4 \omega _{n-2}\sqrt{\pi}\;(n-1)^{(n-1)/2}
\Gamma\left ({n-2 \over 2} \right ) \over \omega _nn^{n/2}\Gamma\left ({n-1 \over 2} \right )}
= \frac{4(n-1)^{(n-1)/2}\;\omega_{n-1}}{ n^{n/2}\;\omega_n},
\end{eqnarray*}
which together with (\ref{EH_inf}) and (\ref{EH_inf16}) proves the equality (\ref{EH_Y}).

Since $\bs z \in {\Bbb S}^{n-1}$ and the supremum with respect to the parameter
$$
\alpha={n\gamma \over 2\sqrt{n-1} }={n |\bs z'| \over 2 z_n \sqrt{n-1} },
$$
in (\ref{EH_inf6}) is attained for $\alpha=0$, it follows that the absolute value of
the directional derivative of a harmonic function $u$ with respect to the normal
$\bs e_n$ to $\partial {\Bbb R}^n_+$, taken at an arbitrary point $x \in {\Bbb R}^n_+$,
has the same supremum as $|\nabla u(x)|$.
\end{proof}

\begin{remark}\;{\bf 2}. Inequality (\ref{EH_X}) can be written in the form
\begin{equation} \label{EH_2YCA}
|\nabla u(x)|\leq {\cal C}_\infty(x)\sup _{y \in {\mathbb R}^{n}_+}\big | u (y) \big |.
\end{equation}
Using here (\ref{EH_Y}), we arrive at the explicit sharp inequality
$$
|\nabla u(x)|\leq \frac{4(n-1)^{(n-1)/2}\;\omega_{n-1}}{ n^{n/2}\;\omega_n x_n}\;\sup _{y \in {\mathbb R}^{n}_+}\big | u (y) \big |\;,
$$
which generalizes the real value analog (\ref{L_H}) of (\ref{E_KM2})
to harmonic functions in the $n$-dimensional half-space.

From (\ref{EH_2YCA}) it follows that
\begin{equation} \label{EH_2YC}
|\nabla u(x)|\leq {\cal C}_\infty(x)\sup _{y \in {\mathbb R}^{n}_+}\big | u (y) -\omega \big |
\end{equation}
with an arbitrary constant $\omega$. Minimizing (\ref{EH_2YC}) in $\omega $, we obtain
\begin{equation} \label{EH_2YO}
|\nabla u(x)|\leq {{\cal C}_\infty(x) \over 2}\;{\rm osc}_{_{{\mathbb R}^{n}_+}}\!(u),
\end{equation}
where ${\rm osc}_{_{{\mathbb R}^{n}_+}}\!(u)$
is the oscillation of $u$ on ${\mathbb R}^{n}_+$.

Equality \ref{EH_Y}) and inequality (\ref{EH_2YO}) imply the sharp estimate
$$
|\nabla u(x)|\leq \frac{2(n-1)^{(n-1)/2}\;\omega_{n-1}}{ n^{n/2}\;\omega_n x_n}\;
{\rm osc}_{_{{\mathbb R}^{n}_+}}\!(u),
$$
which is an analogue of (\ref{E_KM2A}) for harmonic functions in ${\mathbb R}^{n}_+$.
\end{remark}

\medskip
{\bf Acknowledgments.}
The research of the first author was supported by the KAMEA program of
the Ministry of Absorption, State of Israel, and by the Ariel University
Center of Samaria. The second author was partially supported by the UK
Engineering and Physical Sciences Research Council grant EP/F005563/1.


\bigskip
\bigskip
{
\sc Gershon Kresin}
\\
{\sc e-mail:\;}{\tt kresin@ariel.ac.il
}
\\
{\sc address:\;}{\it Department of Computer Science and Mathematics,
            Ariel University Center of Samaria,\\
            44837 Ariel,
            Israel}
\\
\\
{\sc Vladimir Maz'ya}
\\
{\sc e-mail:\;}{\tt vlmaz@liv.ac.uk}
\\
{\sc address:\;}{\it {Department of Mathematical Sciences,
            University of Liverpool,
            M$\&$O Building,\\ Liverpool, L69 3BX,
            UK}
\\
{\sc e-mail:\;}{\tt vlmaz@mai.liu.se}
\\
{\sc address:\;}{\it Department of Mathematics,
            Link\"oping University,
            SE-58183 Link\"oping,
            Sweden}

\begin{thebibliography}{99}

\bibitem{Bo1} E. Borel, \textit{D\'emonstration \'el\'ementaire d'un th\'eor\`eme
de M. Picard sur les fonctions enti\`eres}.
C.R. Acad. Sci. \textbf{122} (1896),1045--1048.

\bibitem{Bo2} E. Borel, \textit{M\'ethodes et Probl\`emes de Th\'eorie des Fonctions}.
Gauthier-Villars, Paris, 1922.

\bibitem{Bur} R.B. Burckel,  \textit{An Introduction to Classical Complex Analysis, V. 1}.
Academic Press, New York-San Francisco, 1979.


\bibitem{Ha} J. Hadamard, \textit{Sur les fonctions enti\`eres de la forme $e^{G(X)}$}.
C.R. Acad. Sci. \textbf{114} (1892), 1053--1055.

\bibitem{KHAV}D. Khavinson, An extremal problem for harmonic functions in the ball, {\it Canad.  Math. Bull.,} {\bf 35}(2), 1992, 218-220.

\bibitem{KM} G. Kresin and V. Maz'ya,  \textit{Sharp Real-Part Theorems. A Unified Approach}.
Lect. Notes in Math., \textbf{1903}, Springer-Verlag, Berlin-Heidelberg-New York, 2007.

\bibitem{KM_1} G. Kresin and V. Maz'ya,  \textit{Sharp pointwise estimates for solutions of
strongly elliptic second order systems with boundary data from $L^p$}. Appl. Anal.
\textbf{86, N. 7} (2007), 783--805.

\bibitem{Lan1} E. Landau, \textit{\"Uber den Picardschen Satz}.
Vierteljahrschr. Naturforsch. Gesell. Z\"urich \textbf{51} (1906), 252--318.

\bibitem{Lan2} E. Landau, \textit{Beitr\"age zur analytischen Zahlentheorie}.
Rend. Circ. Mat. Palermo \textbf{26} (1908), 169--302.

\bibitem{Lin} E. Lindel\"of, \textit{M\'emoire sur certaines
in\'egalit\'es dans la th\'eorie des fonctions monog\`enes et
sur quelques propri\'et\'es nouvelles de ces fonctions dans
le voisinage d'un point singulier essentiel}.
Acta Soc. Sci. Fennicae \textbf{35, N. 7} (1908), 1--35.

\bibitem{PBM} A. P. Prudnikov, Yu. A. Brychkov and O. I. Marichev,
\textit{Integrals and Series, Vol. 1, Elementary Functions}.
Gordon and Breach Sci. Publ., New York, 1986.

\end{thebibliography}
\end{document}